\providecommand{\U}[1]{\protect\rule{.1in}{.1in}}
\providecommand{\U}[1]{\protect\rule{.1in}{.1in}}
\providecommand{\U}[1]{\protect\rule{.1in}{.1in}} \textwidth 17cm
\theoremstyle{plain}
\newtheorem{theorem}{Theorem}[section]
\newtheorem{lemma}[theorem]{Lemma}
\newtheorem{problem}[theorem]{Problem}
\numberwithin{equation}{section}
\begin{document}
\title{On summability of nonlinear mappings: a new approach}
\author{Daniel Pellegrino and Joedson Santos}
\address[D. Pellegrino]{ Departamento de Matem\'{a}tica, Universidade Federal da
Para\'{\i}ba, 58.051-900 - Jo\~{a}o Pessoa, Brazil\\
[J. Santos] Departamento de Matem\'{a}tica, Universidade Federal de Sergipe,
49500-000-Itabaiana, Brazil}
\email{dmpellegrino@gmail.com}
\thanks{D. Pellegrino by INCT-Matem\'{a}tica, PROCAD-NF Capes, CNPq Grant
620108/2008-8 (Ed. Casadinho) and CNPq Grant 301237/2009-3. }

\begin{abstract}
The main goal of this paper is to characterize arbitrary nonlinear
(non-multilinear) mappings $f:X_{1}\times\cdots\times X_{n}\rightarrow Y$
between Banach spaces that satisfy a quite natural Pietsch Domination-type
theorem around a given point $(a_{1},...,a_{n})\in$ $X_{1}\times\cdots\times
X_{n}$. As a consequence of our approach a notion of weighted summability
arises naturally, which may be an interesting topic for further investigation.

\end{abstract}
\maketitle

\section{Introduction}

The theory of absolutely summing operators was initiated with Grothendieck%
\'{}%
s ideas in the 50s but just in the sixties (see \cite{LP, stu}) the results
were better understood and fully explored (for details we refer to the book
\cite{DJT}). Besides its intrinsic interest, this theory has beautiful
applications in Banach space theory and nice connections with the geometry of
the Banach spaces involved (see, for example, \cite{DavisJ, LP} or
\cite{PellZ} for a more recent approach). Due to the success of the linear
theory, it is not a surprise that many authors have devoted their interest to
the nonlinear setting; the multilinear theory, however, has a longer history,
which seems to start with \cite{BH, LLL}; for recent different nonlinear
approaches and applications we mention \cite{DPE, Def, Def2, Popa, Junek,
Nach, MP, SST, ppp, Perr} and references therein.

Pietsch Domination-Factorization Theorems play a central role in the theory of
absolutely summing linear operators and provide an unexpected and beautiful
measure theoretic taste in the theory (for details we mention the monographs
\cite{AK, DF, DJT, Ryan}). In the last decade several different nonlinear
versions of Pietsch Domination-Factorization Theorem have\ appeared in the
literature (see, for example, \cite{AMe, JFA, BPR, FaJo, Geiss, SP}); for this
reason, in \cite{BPRn}, an abstract unified approach to Pietsch-type results
was presented as an attempt to show that all the known Pietsch-type theorems
were particular cases of a unified general version. The main problem
investigated in the present paper is motivated by the Pietsch- Domination
Theorem (PDT) for $n$-linear mappings between Banach spaces, which we describe below.

From now on, if $X_{1},...,X_{n},Y$ are Banach spaces over a fixed scalar
field which can be either $\mathbb{K}=\mathbb{R}$ or $\mathbb{C}$,
$Map(X_{1},...,X_{n};Y)$ will denote the set of all arbitrary mappings from
$X_{1}\times\cdots\times X_{n}$ to $Y$ (no assumption is necessary). The
topological dual of a Banach space $X$ will be denoted by $X^{\ast}$ and its
closed unit ball will be represented by $B_{X^{\ast}},$ with the weak-star topology.

Let $0<p_{1},...,p_{n}<\infty$ and $1/p=%
{\textstyle\sum\limits_{j=1}^{n}}
1/p_{j}.$ An $n$-linear mapping $T:X_{1}\times\cdots\times X_{n}\rightarrow Y$
is $(p_{1},...,p_{n})$-dominated if there is a constant $C>0$ so that
\begin{equation}
\left(  \sum_{j=1}^{m}\left\Vert T(x_{j}^{(1)},...,x_{j}^{(n)})\right\Vert
^{p}\right)  ^{\frac{1}{p}}\leq C%
{\displaystyle\prod\limits_{k=1}^{n}}
\sup_{\varphi\in B_{X_{k}^{\ast}}}\left(
{\displaystyle\sum\limits_{j=1}^{m}}
\left\vert \varphi(x_{j}^{(k)})\right\vert ^{p_{k}}\right)  ^{1/p_{k}},
\label{abs}%
\end{equation}
regardless of the choice of the positive integer $m,$ $x_{j}^{(k)}\in X_{k}$,
$k=1,...,n$ and $j=1,...,m$. The folkloric PDT for $(p_{1},...,p_{n}%
)$-dominated multilinear mappings (see \cite{Geiss} or \cite{Perr2} for a
detailed proof) asserts that $T$ is $(p_{1},...,p_{n})$-dominated if and only
if there are Borel probabilities $\mu_{k}$ on $B_{X_{k}^{\ast}},$ $k=1,...,n$,
and a constant $C>0$ such that%
\begin{equation}
\left\Vert T(x^{(1)},...,x^{(n)})\right\Vert \leq C\left(
{\displaystyle\int\nolimits_{B_{X_{1}^{\ast}}}}
\left\vert \varphi(x^{(1)})\right\vert ^{p_{1}}d\mu_{1}\right)  ^{\frac
{1}{p_{1}}}\cdots\left(
{\displaystyle\int\nolimits_{B_{X_{n}^{\ast}}}}
\left\vert \varphi(x^{(n)})\right\vert ^{p_{n}}d\mu_{k}\right)  ^{\frac
{1}{p_{n}}} \label{tyy}%
\end{equation}
for all $x^{(j)}\in X_{j}$, $j=1,...,n.$

A related question, not covered by the abstract approach presented in
\cite{BPRn}, arises:

\begin{problem}
\label{wee}If $(a_{1},...,a_{n})\in X_{1}\times\cdots\times X_{n}$, what kind
of mappings $f\in Map(X_{1},...,X_{n};Y)$ satisfy, for some $C>0$ and Borel
probabilities $\mu_{k}$ on $B_{X_{k}^{\ast}},$ $k=1,...,n$, the inequality%
\begin{equation}
\left\Vert f(a_{1}+x^{(1)},...,a_{n}+x^{(n)})-f(a_{1},...,a_{n})\right\Vert
\leq C%
{\displaystyle\prod\limits_{k=1}^{n}}
\left(  \int_{B_{X_{k}^{\ast}}}\left\vert \varphi(x^{(k)})\right\vert ^{p_{k}%
}d\mu_{k}\right)  ^{\frac{1}{p_{k}}} \label{domGG}%
\end{equation}
for all $x^{(j)}\in X_{j}$, $j=1,...,n$ $?$
\end{problem}

In the next section we solve Problem \ref{wee}.

\section{Main Result}

Let $0<p_{1},...,p_{n}<\infty$ and $1/p=%
{\textstyle\sum\limits_{j=1}^{n}}
1/p_{j}.$ We will say that $f\in Map(X_{1},...,X_{n};Y)$ is $(p_{1}%
,...,p_{n})$-dominated at $(a_{1},...,a_{n})\in X_{1}\times\cdots\times X_{n}$
if there is a $C>0$ and there are Borel probabilities $\mu_{k}$ on
$B_{X_{k}^{\ast}},$ $k=1,...,n$, such that (\ref{domGG}) is valid for all
$x^{(j)}\in X_{j}$, $j=1,...,n$.

It is worth mentioning that Pietsch's original proof of his domination theorem
uses Ky Fan Lemma instead of the usual Hahn-Banach separation theorem (see
\cite{Pi}). The use of Hahn-Banach theorem seems to be not adequate for
proving our main result; for this task Pietsch's original idea of using Ky Fan
Lemma will be very useful. It is in some sense a nice surprise that Pietsch's
first argument conceived for linear maps has shown to be the more adequate
when dealing with a very general and fully nonlinear context.

\begin{lemma}
[Ky Fan]Let $K$ be a compact Hausdorff topological space and $\mathcal{F}$ be
a concave family of functions $f:K\rightarrow\mathbb{R}$ which are convex and
lower semicontinuous. If for each $f\in\mathcal{F}$ there is a $x_{f}\in K$ so
that $f(x_{f})\leq0,$ then there is a $x_{0}\in K$ such that $f(x_{0})\leq0$
for every $f\in\mathcal{F}$ .
\end{lemma}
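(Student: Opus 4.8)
The plan is to turn the conclusion into the nonemptiness of an intersection and then extract it from compactness. For each $f\in\mathcal{F}$ put $C_f=\{x\in K: f(x)\le 0\}$. Lower semicontinuity of $f$ makes $C_f$ closed, and the hypothesis says each $C_f$ is nonempty; the assertion to be proved is precisely that $\bigcap_{f\in\mathcal{F}}C_f\neq\emptyset$. Since $K$ is compact, it suffices to check that the closed sets $\{C_f\}_{f\in\mathcal{F}}$ enjoy the finite intersection property, so everything reduces to the finite case.

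Thus I would fix $f_1,\dots,f_m\in\mathcal{F}$ and seek a single $x_0\in K$ with $f_i(x_0)\le 0$ for all $i$. Here the convexity of the individual functions and the concavity of the family must be played off against each other, working in the natural setting in which $K$ is a compact convex subset of a topological vector space (which is exactly the situation for $B_{X^\ast}$, or a finite product of such balls, endowed with the weak-star topology). On $K\times\Delta$, where $\Delta=\{\lambda\in\mathbb{R}^m:\lambda_i\ge 0,\ \sum_i\lambda_i=1\}$, consider $g(x,\lambda)=\sum_{i=1}^m\lambda_i f_i(x)$: for fixed $\lambda$ it is convex and lower semicontinuous in $x$, while for fixed $x$ it is affine, hence concave and continuous, in $\lambda$.

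The crux is a minimax argument. For each $\lambda\in\Delta$ the concavity of $\mathcal{F}$ supplies an $h\in\mathcal{F}$ with $\sum_i\lambda_i f_i\le h$ pointwise, and evaluating at the point $x_h$ granted by the hypothesis gives $g(x_h,\lambda)\le h(x_h)\le 0$; hence $\min_{x\in K}g(x,\lambda)\le 0$ for every $\lambda$, the minimum being attained by lower semicontinuity on the compact set $K$. Sion's minimax theorem, applied to $g$ on the compact convex sets $K$ and $\Delta$, then yields
\[
\min_{x\in K}\max_{\lambda\in\Delta}g(x,\lambda)=\max_{\lambda\in\Delta}\min_{x\in K}g(x,\lambda)\le 0 .
\]
Consequently there is $x_0\in K$ with $\max_{\lambda\in\Delta}g(x_0,\lambda)\le 0$, and since $\max_{\lambda\in\Delta}\sum_i\lambda_i f_i(x_0)=\max_{1\le i\le m}f_i(x_0)$ this $x_0$ satisfies $f_i(x_0)\le 0$ for every $i$. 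That is the finite intersection property, and compactness then delivers the common zero.

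The step I expect to be the main obstacle is exactly this minimax passage: one must be sure the hypotheses of the minimax theorem really hold --- convexity of the domain $K$ (only implicit in the statement as written, but present in every intended application), lower semicontinuity in $x$, and affine dependence on $\lambda$. If one prefers not to invoke Sion's theorem, the same finite-dimensional fact can be obtained by separation: the set $M=\{y\in\mathbb{R}^m:\exists\,x\in K,\ f_i(x)\le y_i\ \forall i\}$ is convex and upward closed, so if it missed the origin a supporting functional would produce weights $\lambda\in\Delta$ with $\min_{x}g(x,\lambda)>0$, contradicting the bound above; compactness together with lower semicontinuity make $M$ closed, so the separation is strict and the relevant infima are attained. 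Either route reduces the lemma to elementary convexity once the finite intersection property has been singled out as the true target.
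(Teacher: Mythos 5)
The paper never proves this lemma: it is invoked as a classical black box (with a pointer to Pietsch's book \cite{Pi}, where the original linear domination theorem is proved this way), so there is no internal proof to set against yours; what you have written is, in substance, the standard proof from the literature, and it is correct. The reduction to the finite intersection property of the closed sets $C_f=\{x\in K: f(x)\le 0\}$ is exactly right, and the concavity of the family enters precisely where it must: for each $\lambda\in\Delta$ it supplies $h\in\mathcal{F}$ with $\sum_i\lambda_i f_i\le h$, whence $\inf_{x\in K}g(x,\lambda)\le h(x_h)\le 0$ (without this hypothesis the finite case fails, e.g.\ the non-concave family $\{x,\,1-x\}$ on $[0,1]$). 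Both of your routes for the finite case are sound: Sion's minimax theorem applies since $g(\cdot,\lambda)$ is convex and lower semicontinuous, $g(x,\cdot)$ is affine, and $\Delta$ is compact convex, with the outer minimum attained because $\sup_{\lambda\in\Delta}g(x,\lambda)=\max_{1\le i\le m}f_i(x)$ is lower semicontinuous on the compact $K$; alternatively the separation argument works, with closedness of $M$ (verified via nets, compactness and lower semicontinuity) guaranteeing strict separation, and upward closedness forcing the separating functional into $\Delta$. Your flagged caveat is the one genuine defect --- of the statement, not of your proof: as written, $K$ is only a compact Hausdorff space, so ``convex function'' is not even meaningful, and your argument (Sion, or convexity of $M$) really needs $K$ to be a compact \emph{convex} subset of a Hausdorff topological vector space. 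That is silently true in the paper's only application, where $K=P(B_{X_1^\ast})\times\cdots\times P(B_{X_n^\ast})$ with the weak-star topologies, so identifying and repairing this omission is exactly the right call.
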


For the proof of our main theorem we will need the following lemma (see
\cite[Page 17]{Hardy}):

\begin{lemma}
\label{yy}Let $0<p_{1},...,p_{n},p<\infty$ be so that $1/p=%
{\textstyle\sum\limits_{j=1}^{n}}
1/p_{j}$. Then%
\[
\frac{1}{p}%
{\displaystyle\prod\limits_{j=1}^{n}}
q_{j}^{p}\leq%
{\displaystyle\sum\limits_{j=1}^{n}}
\frac{1}{p_{j}}q_{j}^{p_{j}}%
\]
regardless of the choices of $q_{1},..,q_{n}\geq0.$
\end{lemma}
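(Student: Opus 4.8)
The plan is to recognize this inequality as a disguised form of the weighted arithmetic--geometric mean inequality. First I would introduce the weights $w_{j}:=p/p_{j}$ for $j=1,\ldots,n$. The hypothesis $1/p=\sum_{j=1}^{n}1/p_{j}$ is precisely the assertion that $\sum_{j=1}^{n}w_{j}=p\sum_{j=1}^{n}1/p_{j}=1$, so that $(w_{1},\ldots,w_{n})$ is a genuine system of convex weights.

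Next I would perform the substitution $a_{j}:=q_{j}^{p_{j}}$. Since $q_{j}^{p}=(q_{j}^{p_{j}})^{p/p_{j}}=a_{j}^{w_{j}}$, the product on the left-hand side becomes $\prod_{j=1}^{n}q_{j}^{p}=\prod_{j=1}^{n}a_{j}^{w_{j}}$, the weighted geometric mean of the $a_{j}$ with weights $w_{j}$. Meanwhile, using $1/p_{j}=w_{j}/p$, the right-hand side reads $\sum_{j=1}^{n}\frac{1}{p_{j}}q_{j}^{p_{j}}=\frac{1}{p}\sum_{j=1}^{n}w_{j}a_{j}$, namely $1/p$ times the corresponding weighted arithmetic mean. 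After this reformulation, and cancelling the common factor $1/p$, the claim reduces to the weighted AM--GM inequality $\prod_{j=1}^{n}a_{j}^{w_{j}}\leq\sum_{j=1}^{n}w_{j}a_{j}$ valid for $a_{j}\geq0$.

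I would then dispatch this last inequality by concavity of the logarithm (Jensen's inequality): for strictly positive $a_{j}$ one has $\log\left(\sum_{j}w_{j}a_{j}\right)\geq\sum_{j}w_{j}\log a_{j}=\log\left(\prod_{j}a_{j}^{w_{j}}\right)$, and exponentiating yields the desired bound; multiplying through by $1/p$ returns the stated form. This is exactly the content recorded in Hardy--Littlewood--P\'{o}lya, so no new idea is needed beyond identifying the correct change of variables.

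There is no genuine obstacle here, since the statement is a classical inequality; the only point demanding a word of care is the boundary behavior when some $q_{j}$ (equivalently some $a_{j}$) vanishes. In that case the left-hand side equals $0$ while the right-hand side is nonnegative, so the inequality holds trivially, and the Jensen argument above is invoked only on the region where all $a_{j}>0$.
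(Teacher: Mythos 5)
Your proof is correct and coincides with the paper's treatment: the paper gives no argument of its own, merely citing Hardy--Littlewood--P\'{o}lya (page 17), which is precisely the weighted AM--GM inequality you reduce to via the substitution $w_{j}=p/p_{j}$, $a_{j}=q_{j}^{p_{j}}$. Your Jensen argument, together with the remark handling vanishing $q_{j}$, correctly supplies the standard proof of that cited inequality, so nothing is missing.
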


\begin{theorem}
\label{ttta}A map $f\in Map(X_{1},...,X_{n};Y)$ is $(p_{1},...,p_{n}%
)$-dominated at $(a_{1},...,a_{n})\in X_{1}\times\cdots\times X_{n}$ if and
only if there is a $C>0$ such that
\begin{align}
&  \left(
{\displaystyle\sum\limits_{j=1}^{m}}
\left(  \left\vert b_{j}^{(1)}...b_{j}^{(n)}\right\vert \left\Vert
f(a_{1}+x_{j}^{(1)},...,a_{n}+x_{j}^{(n)})-f(a_{1},...,a_{n})\right\Vert
\right)  ^{p}\right)  ^{1/p}\label{qww}\\
&  \leq C%
{\displaystyle\prod\limits_{k=1}^{n}}
\sup_{\varphi\in B_{X_{k}^{\ast}}}\left(
{\displaystyle\sum\limits_{j=1}^{m}}
\left(  \left\vert b_{j}^{(k)}\right\vert \left\vert \varphi(x_{j}%
^{(k)})\right\vert \right)  ^{p_{k}}\right)  ^{1/p_{k}}\nonumber
\end{align}
for every positive integer $m$, $(x_{j}^{(k)},b_{j}^{(k)})\in X_{k}%
\times\mathbb{K}$, with $(j,k)\in\{1,...,m\}\times\{1,...,n\}.$
\end{theorem}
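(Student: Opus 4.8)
===

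The plan is to prove both implications, with the forward direction (domination at $(a_1,\dots,a_n)$ implies the summability inequality \eqref{qww}) being the routine one and the converse being where the real work lies. For the forward implication, I would assume the integral domination \eqref{domGG} holds and simply plug in the scaled vectors $b_j^{(k)} x_j^{(k)}$ in place of $x^{(k)}$. Using the homogeneity of $\varphi$, each factor on the right of \eqref{domGG} becomes $|b_j^{(k)}|\,\bigl(\int |\varphi(x_j^{(k)})|^{p_k}d\mu_k\bigr)^{1/p_k}$. Raising the resulting pointwise bound to the power $p$, summing over $j$, and estimating each integral by the supremum over $B_{X_k^\ast}$ (since the $\mu_k$ are probabilities) should deliver \eqref{qww} after an application of the generalized Hölder inequality to split the product of sums. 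This direction needs no Ky Fan machinery.

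For the converse, which is the heart of the matter, I would follow Pietsch's original Ky Fan strategy as the paper signals. Starting from \eqref{qww}, the goal is to manufacture the probability measures $\mu_k$. The key is to set up, on the compact convex set $K = \prod_{k=1}^n P(B_{X_k^\ast})$ of tuples of probability measures (with the weak-star topology, so $K$ is compact Hausdorff), a family $\mathcal{F}$ of functions built from finite data $(x_j^{(k)}, b_j^{(k)})$. For each such finite choice I would define a function on $K$ of the form
\[
(\mu_1,\dots,\mu_n)\longmapsto \sum_{j=1}^m \left[\left(|b_j^{(1)}\cdots b_j^{(n)}|\,\|f(a+x_j)-f(a)\|\right)^p \frac{1}{C^p} - \prod_{k=1}^n\left(\int_{B_{X_k^\ast}}(|b_j^{(k)}||\varphi(x_j^{(k)})|)^{p_k}d\mu_k\right)^{p/p_k}\right],
\]
up to the right normalization. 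I would verify that each such function is convex and lower semicontinuous in $(\mu_1,\dots,\mu_n)$ and that $\mathcal{F}$ is concave as a family, and then check that \eqref{qww} guarantees that for each $f\in\mathcal{F}$ some evaluation point makes it $\le 0$. Ky Fan then yields a single tuple $(\mu_1,\dots,\mu_n)$ working simultaneously for all finite data.

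The main obstacle, and the precise point where Lemma \ref{yy} enters, is that the integral terms appear as a \emph{product} of $n$ separate integrals raised to fractional powers $p/p_k$, which is neither convex nor linear in the measures and obstructs both the convexity check and the concavity of the family. The device is to dominate that product by a sum using Lemma \ref{yy}: writing $q_k = \bigl(\int(|b_j^{(k)}||\varphi(x_j^{(k)})|)^{p_k}d\mu_k\bigr)^{1/p_k}$, the lemma gives $\tfrac1p\prod_k q_k^p \le \sum_k \tfrac{1}{p_k} q_k^{p_k}$, and each $q_k^{p_k}$ is now \emph{linear} (hence convex and weak-star lower semicontinuous) in $\mu_k$. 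This linearization is what makes the Ky Fan hypotheses verifiable. After extracting the single tuple $(\mu_1,\dots,\mu_n)$, I would recover the genuine product-form domination \eqref{domGG} by the standard homogenization trick: apply the obtained single-term inequality to the scaled vectors $t_k^{-1}x^{(k)}$ with weights absorbed, optimize over the free positive parameters $t_k$ subject to $\prod t_k$ fixed, and use the equality case of Lemma \ref{yy} to convert the sum back into the desired product of integrals, which produces \eqref{domGG} with a possibly adjusted constant.
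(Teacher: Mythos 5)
Your proposal is correct and follows essentially the same route as the paper: H\"older's inequality for the forward implication, and for the converse Ky Fan's lemma applied on $\prod_{k}P(B_{X_k^\ast})$ to the family of functionals in which the product of integrals has been replaced, via Lemma~\ref{yy}, by the linear (in the measures) sum $\sum_k \frac{1}{p_k}\int(|b^{(k)}||\varphi(x^{(k)})|)^{p_k}d\mu_k$, followed by the scaling/homogenization of the free weights $b^{(k)}$ to convert the sum bound back into the product form \eqref{domGG} --- exactly the paper's $\vartheta_{\beta,k}$ computation. The only details you elide, both routine and handled explicitly in the paper, are the degenerate case where some $\tau_k=\bigl(\int(|b^{(k)}||\varphi(x^{(k)})|)^{p_k}d\overline{\mu_k}\bigr)^{1/p_k}$ vanishes (treated by letting the scaling parameter tend to infinity) and the verification that $\mathcal{F}$ is concave, which rests on absorbing the convex-combination coefficients into the weights via $\alpha^{1/p_s}b^{(s)}$ --- the structural reason the weights $b_j^{(k)}$ must appear in \eqref{qww} at all.
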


\begin{proof}
In order to simplify notation, from now on we will write%
\[
f(b^{(k)},x^{(k)})_{k=1}^{n}:=\left(  \left\vert b^{(1)}...b^{(n)}\right\vert
\left\Vert f(a_{1}+x^{(1)},...,a_{n}+x^{(n)})-f(a_{1},...,a_{n})\right\Vert
\right)  ^{p}.
\]

Assume the existence of such measures $\mu_{1},...,\mu_{n}$ satisfying
(\ref{domGG}). Then, given $m\in\mathbb{N}$, $x_{j}^{(l)}\in E_{l}$ and
$b_{j}^{(l)}\in\mathbb{K},$ with $(j,l)\in\{1,...,m\}\times\{1,...,n\},$ we
have, using H\"{o}lder Inequality,
\begin{align*}
&  \sum_{j=1}^{m}f(b_{j}^{(k)},x_{j}^{(k)})_{k=1}^{n}\leq C^{p}\sum_{j=1}^{m}%
{\displaystyle\prod\limits_{k=1}^{n}}
\left(  \int_{B_{X_{k}^{\ast}}}\left(  \left\vert b_{j}^{(k)}\right\vert
\left\vert \varphi(x_{j}^{(k)})\right\vert \right)  ^{p_{k}}d\mu_{k}\right)
^{\frac{p}{p_{k}}}\\
&  \leq C^{p}%
{\displaystyle\prod\limits_{k=1}^{n}}
\left(  \sum_{j=1}^{m}\int_{B_{X_{k}^{\ast}}}\left(  \left\vert b_{j}%
^{(k)}\right\vert \left\vert \varphi(x_{j}^{(k)})\right\vert \right)  ^{p_{k}%
}d\mu_{k}\right)  ^{\frac{p}{p_{k}}}\\
&  \leq C^{p}%
{\displaystyle\prod\limits_{k=1}^{n}}
\left(  \sup_{\varphi\in B_{X_{k}^{\ast}}}\sum_{j=1}^{m}\left(  \left\vert
b_{j}^{(k)}\right\vert \left\vert \varphi(x_{j}^{(k)})\right\vert \right)
^{p_{k}}\right)  ^{\frac{p}{p_{k}}}.
\end{align*}
Hence we have (\ref{qww}). Conversely, suppose (\ref{qww}) and consider the
sets $P(B_{X_{k}^{\ast}})$ of the probability measures in $C(B_{X_{k}^{\ast}%
})^{\ast}$, for all $k=1,...,n$. It is well-known that each $P(B_{X_{k}^{\ast
}})$ is compact when each $C(B_{X_{k}^{\ast}})^{\ast}$ is endowed with the
weak-star topology. For each $(x_{j}^{(l)})_{j=1}^{m}$ in $E_{l}$ and
$(b_{j}^{(s)})_{j=1}^{m}$ in $\mathbb{K},$ with $(s,l)\in\{1,...,n\}\times
\{1,...,n\},$ let%
\begin{align*}
&  g=g_{(x_{j}^{(l)})_{j=1}^{m},(b_{j}^{(s)})_{j=1}^{m},(s,l)\in
\{1,...,n\}\times\{1,...,n\}}:P(B_{X_{1}^{\ast}})\times\cdots\times
P(B_{X_{n}^{\ast}})\rightarrow\mathbb{R}\\
g\left(  (\mu_{i})_{i=1}^{n}\right)   &  =\sum_{j=1}^{m}\left[  \frac{1}%
{p}f(b_{j}^{(k)},x_{j}^{(k)})_{k=1}^{n}-C^{p}\sum_{k=1}^{n}\frac{1}{p_{k}}%
\int_{B_{X_{k}^{\ast}}}\left(  \left\vert b_{j}^{(k)}\right\vert \left\vert
\varphi(x_{j}^{(k)})\right\vert \right)  ^{p_{k}}d\mu_{k}\right]  .
\end{align*}
Note that the family $\mathcal{F}$ of all such $g$ is concave. In fact, let
$N$ be a positive integer, $g_{k}\in\mathcal{F}$ and $\alpha_{k}\geq0,$
$k=1,...,N,$ so that $\alpha_{1}+...+\alpha_{N}=1$. We have%
\[%
{\displaystyle\sum\limits_{k=1}^{N}}
\alpha_{k}g_{k}\left(  (\mu_{i})_{i=1}^{n}\right)  \leq g_{(x_{j_{k}}%
^{(l)})_{j_{k},k=1}^{m_{k},N},(\alpha_{k}^{\frac{1}{p_{s}}}b_{j_{k}}%
^{(s)})_{j_{k},k=1}^{m_{k},N},(s,l)\in\{1,...,n\}\times\{1,...,n\}}\left(
(\mu_{i})_{i=1}^{n}\right)  .
\]

One can also easily prove that each $g\in\mathcal{F}$ is convex and
continuous. Besides, for each $g\in\mathcal{F}$ there are measures $\mu
_{k}^{g}\in P(B_{X_{k}^{\ast}}),$ $k=1,...,n$, so that%
\[
g(\mu_{1}^{g},...,\mu_{n}^{g})\leq0.
\]
In fact, since each $B_{X_{k}^{\ast}}$ is compact ($k=1,...,n$) there are
$\varphi_{k}\in B_{X_{k}^{\ast}}$ so that%
\[
\sum_{j=1}^{m}\left(  \left\vert b_{j}^{(k)}\right\vert \left\vert \varphi
_{k}(x_{j}^{(k)})\right\vert \right)  ^{p_{k}}=\sup_{\varphi\in B_{X_{k}%
^{\ast}}}\sum_{j=1}^{m}\left(  \left\vert b_{j}^{(k)}\right\vert \left\vert
\varphi(x_{j}^{(k)})\right\vert \right)  ^{p_{k}}.
\]
Now, consider the Dirac measures $\mu_{k}^{g}=\delta_{\varphi_{k}},$
$k=1,...,n,$ and hence%
\begin{align*}
&  g(\mu_{1}^{g},...,\mu_{n}^{g})=\sum_{j=1}^{m}\left[  \frac{1}{p}%
f(b_{j}^{(k)},x_{j}^{(k)})_{k=1}^{n}\right]  -C^{p}\sum_{k=1}^{n}\frac
{1}{p_{k}}\int_{B_{X_{k}^{\ast}}}\sum_{j=1}^{m}\left(  \left\vert b_{j}%
^{(k)}\right\vert \left\vert \varphi(x_{j}^{(k)})\right\vert \right)  ^{p_{k}%
}d\mu_{k}^{g}\\
&  =\sum_{j=1}^{m}\left[  \frac{1}{p}f(b_{j}^{(k)},x_{j}^{(k)})_{k=1}%
^{n}\right]  -C^{p}\sum_{k=1}^{n}\frac{1}{p_{k}}\left[  \left(  \sup
_{\varphi\in B_{X_{k}^{\ast}}}\sum_{j=1}^{m}\left(  \left\vert b_{j}%
^{(k)}\right\vert \left\vert \varphi(x_{j}^{(k)})\right\vert \right)  ^{p_{k}%
}\right)  ^{\frac{1}{p_{k}}}\right]  ^{p_{k}}\\
&  \overset{\text{(*)}}{\leq}\sum_{j=1}^{m}\left[  \frac{1}{p}f(b_{j}%
^{(k)},x_{j}^{(k)})_{k=1}^{n}\right]  -C^{p}\frac{1}{p}%
{\displaystyle\prod\limits_{k=1}^{n}}
\left[  \left(  \sup_{\varphi\in B_{X_{k}^{\ast}}}\sum_{j=1}^{m}\left(
\left\vert b_{j}^{(k)}\right\vert \left\vert \varphi(x_{j}^{(k)})\right\vert
\right)  ^{p_{k}}\right)  ^{\frac{1}{p_{k}}}\right]  ^{p}\\
&  \overset{\text{(**)}}{\leq}0,
\end{align*}
where in (*) we have used Lemma \ref{yy} and in (**) we invoked (\ref{qww}).
So Ky Fan Lemma applies and we obtain $\overline{\mu_{k}}\in P(B_{X_{k}^{\ast
}}),$ $k=1,...,n,$ so that%
\[
g(\overline{\mu_{1}},...,\overline{\mu_{n}})\leq0
\]
for all $g\in\mathcal{F}$. Hence
\[
\sum_{j=1}^{m}\left[  \frac{1}{p}f(b_{j}^{(k)},x_{j}^{(k)})_{k=1}^{n}\right]
-C^{p}\sum_{k=1}^{n}\frac{1}{p_{k}}\int_{B_{X_{k}^{\ast}}}\sum_{j=1}%
^{m}\left(  \left\vert b_{j}^{(k)}\right\vert \left\vert \varphi(x_{j}%
^{(k)})\right\vert \right)  ^{p_{k}}d\overline{\mu_{k}}\leq0
\]
and making $m=1$ we get (for every $b^{(k)}\in\mathbb{K}$ and $x^{(k)}\in
X_{k}$, $k=1,...,n$)%
\begin{align}
&  \frac{1}{p}\left(  \left\vert b^{(1)}...b^{(n)}\right\vert \left\Vert
f(a_{1}+x^{(1)},...,a_{n}+x^{(n)})-f(a_{1},...,a_{n})\right\Vert \right)
^{p}\label{new}\\
&  \leq C^{p}\sum_{k=1}^{n}\frac{1}{p_{k}}\int_{B_{X_{k}^{\ast}}}\left(
\left\vert b^{(k)}\right\vert \left\vert \varphi(x^{(k)})\right\vert \right)
^{p_{k}}d\overline{\mu_{k}}.\nonumber
\end{align}
Let $x^{(1)},...,x^{(n)}$ and $b^{(1)},...,b^{(n)}\neq0$ be given and, for
$k=1,...,n,$ define
\[
\tau_{k}:=\left(  \int_{B_{X_{k}^{\ast}}}\left(  \left\vert b^{(k)}\right\vert
\left\vert \varphi(x^{(k)})\right\vert \right)  ^{p_{k}}d\overline{\mu_{k}%
}\right)  ^{1/p_{k}}.
\]
If $\tau_{k}=0$ for every $k$ then, from (\ref{new}) we conclude that
\[
\left(  \left\vert b^{(1)}...b^{(n)}\right\vert \left\Vert f(a_{1}%
+x^{(1)},...,a_{n}+x^{(n)})-f(a_{1},...,a_{n})\right\Vert \right)  ^{p}=0
\]
and we obtain (\ref{domGG}), as planned. Let us now suppose that $\tau_{j}$ is
not zero for some $j\in\{1,...,n\}$. Consider
\[
V=\{j\in\{1,..,n\};\tau_{j}\neq0\}
\]
and for each $\beta>0$ define%
\[
\vartheta_{\beta,j}=\left\{
\begin{array}
[c]{c}%
\left(  \tau_{j}\beta^{\frac{1}{pp_{j}}}\right)  ^{-1}\text{ if }j\in V\\
1\text{ if }j\notin V.
\end{array}
\right.
\]
So, from (\ref{new}), we have%
\begin{align*}
\frac{1}{p}f(\vartheta_{\beta,k}b^{(k)},x^{(k)})_{k=1}^{n}  &  \leq C^{p}%
\sum_{k=1}^{n}\frac{1}{p_{k}}\int_{B_{X_{k}^{\ast}}}\left(  \left\vert
\vartheta_{\beta,k}b^{(k)}\right\vert \left\vert \varphi(x^{(k)})\right\vert
\right)  ^{p_{k}}d\overline{\mu_{k}}\\
&  \leq C^{p}\sum_{k\in V}\frac{1}{p_{k}}\vartheta_{\beta,k}^{p_{k}}%
\int_{B_{X_{k}^{\ast}}}\left(  \left\vert b^{(k)}\right\vert \left\vert
\varphi(x^{(k)})\right\vert \right)  ^{p_{k}}d\overline{\mu_{k}}\\
&  \leq C^{p}\sum_{k\in V}\frac{1}{p_{k}}\left(  \tau_{k}\beta^{\frac
{1}{pp_{k}}}\right)  ^{-p_{k}}\tau_{k}^{p_{k}}\\
&  =C^{p}\sum_{k\in V}\frac{1}{p_{k}}\frac{1}{\beta^{\frac{1}{p}}}\\
&  \leq\frac{C^{p}}{p}\frac{1}{\beta^{\frac{1}{p}}}.
\end{align*}
Hence
\[
\vartheta_{\beta,1}^{p}...\vartheta_{\beta,n}^{p}\frac{1}{p}f(b^{(k)}%
,x^{(k)})_{k=1}^{n}\leq\frac{C^{p}}{p}\frac{1}{\beta^{1/p}}%
\]
and we have%
\begin{align}
f(b^{(k)},x^{(k)})_{k=1}^{n}  &  \leq C^{p}\beta^{-1/p}\left(  \vartheta
_{\beta,1}^{p}...\vartheta_{\beta,n}^{p}\right)  ^{-1}\label{wwss}\\
&  =C^{p}\beta^{-1/p}%
{\textstyle\prod\nolimits_{j\in V}}
\left(  \tau_{j}\beta^{\frac{1}{pp_{j}}}\right)  ^{p}\nonumber\\
&  =C^{p}\beta^{\left(
{\textstyle\sum\nolimits_{j\in V}}
1/p_{j}\right)  -1/p}%
{\textstyle\prod\nolimits_{j\in V}}
\tau_{j}^{p}.\nonumber
\end{align}
If $V\neq\{1,...,n\}$, then
\[
\frac{1}{p}-%
{\displaystyle\sum\nolimits_{j\in V}}
\frac{1}{p_{j}}>0.
\]
Letting $\beta\rightarrow\infty$ in (\ref{wwss}) we get%
\[
f(b^{(k)},x^{(k)})_{k=1}^{n}=0
\]
and we again reach (\ref{domGG}). If $V=\{1,...,n\}$, from (\ref{wwss}) we
conclude the proof, since
\[
\left(  \left\vert b^{(1)}...b^{(n)}\right\vert \left\Vert f(a_{1}%
+x^{(1)},...,a_{n}+x^{(n)})-f(a_{1},...,a_{n})\right\Vert \right)
^{p}=f(b^{(k)},x^{(k)})_{k=1}^{n}\leq C^{p}%
{\displaystyle\prod\limits_{j=1}^{n}}
\tau_{j}^{p}.
\]

\end{proof}

Note that inequality (\ref{qww}) seems to arise an idea of weighted
summability. We interpret as each $x_{j}^{(k)}$ has a \textquotedblleft
weight\textquotedblright\ $b_{j}^{(k)}$ and in this context the respective
sum
\[
\left\Vert f(a_{1}+x_{j}^{(1)},...,a_{n}+x_{j}^{(n)})-f(a_{1},...,a_{n}%
)\right\Vert
\]
inherits a weight $\left\vert b_{j}^{(1)}...b_{j}^{(n)}\right\vert $. It is
easy to note that if $f$ is $n$-linear and $a_{1}=...=a_{n}=0,$ then
inequality (\ref{qww}) coincides with the usual non-weighted inequality. So,
the concept of weighted summability can be viewed as a natural extension of
the multilinear concept to nonlinear (non-multilinear) maps.

\bigskip

\textbf{Acknowledgement. }The authors thank G. Botelho and P. Rueda for
helpful conversations on the topics of this paper. The authors also thank the
referee for important suggestions.

\end{document}